\theoremstyle{plain}
\newtheorem{proposition}[lemma]{Proposition}
\newtheorem{theorem}[lemma]{Theorem}
\newtheorem{corollary}[lemma]{Corollary}
\newcommand{\fg}{\mathfrak{g}}
\newcommand{\RR}{\mathbb{R}}
\newcommand{\hatG}{\widehat{G}}
\newcommand{\hatfg}{\widehat{\fg}}
\newcommand{\eL}{\mathcal{L}}
\newcommand{\bigO}{\mathcal{O}}
\newcommand{\Ad}{\operatorname{Ad}}
\newcommand{\Aff}{\operatorname{Aff}}
\newcommand{\pair}[2]{\left\langle #1,#2\right\rangle}
\setlist[description]{itemsep=0mm}   
\newcommand{\orcidauthorA}{0000-0002-7287-3156} 
\address{%
$^1$ \quad Maxwell Institute and School of Mathematics, The University
  of Edinburgh, James Clerk Maxwell Building, Peter Guthrie Tait Road,
  Edinburgh EH9 3FD, Scotland, United Kingdom; abeckett@ed.ac.uk}
\abstract{We summarise recent work \cite{Beckett:2022wvo} on the classical result of Kirillov \cite{MR0412321,MR2069175} that any simply-connected homogeneous symplectic space of a connected group $G$ is a hamiltonian $\widehat{G}$-space for a one-dimensional central extension $\widehat{G}$ of $G$, and is thus (by a result of Kostant \cite{MR0294568}) a cover of a coadjoint orbit of $\widehat{G}$. We emphasise that existing proofs in the literature assume that $G$ is simply-connected and that this assumption can be removed by application of a theorem of Neeb \cite{MR1424633}. We also interpret Neeb's theorem as relating the integrability of one-dimensional central extensions of Lie algebras to the integrability of an associated Chevalley--Eilenberg 2-cocycle.}
\begin{document}
%%%%%%%%%%%%%%%%%%%%%%%%%%%%%%%%%%%%%%%%%%

%%%%%%%%%%%%%%%%%%%%%%%%%%%%%%%%%%%%%%%%%%
\section{Introduction}
\label{sec:Introduction}

In his influential book \emph{Structure of Dynamical Systems} \cite{MR0260238,MR1461545}, Souriau introduced the notion of an "elementary system" as the classical analogue of an elementary particle. An elementary system is defined by a phase space which has the structure of a homogeneous symplectic $G$-space; that is, a symplectic manifold equipped with a transitive action by a Lie group $G$, which we assume to be connected. Important examples of such spaces are the coadjoint orbits of $G$ itself, which have a natural $G$-invariant symplectic structure known as the Kirillov--Kostant--Souriau (KKS) structure.  

A related concept also due to Souriau is the moment map. A comoment map for a symplectic manifold $(M,\omega)$ under the action of a group $G$ is an assignment $\varphi:\fg\to C^\infty(M)$, $X\mapsto \varphi_X$  satisfying the equation $\imath_{\xi_X}\omega=d\varphi_X$, where $\imath$ is insertion into the first argument and $\xi_X$ is the fundamental vector field associated to $X$, ie. the velocity of the flow on $M$ generated by $X$. The moment map is defined as the dual to the comoment map, $\pair{\mu(p)}{X\rangle=\varphi_X(p)}$, where $p\in M$ and the angle brackets denote the dual pairing between $\fg$ and $\fg^*$. A (co)moment map always exists in particular if $M$ is simply-connected.

In the case of a homogeneous symplectic $G$-space, a moment map is in particular a symplectomorphism onto a coadjoint orbit $\mathcal{O}$. Since both the source and the target are also $G$-spaces, it is a natural question whether $\mu$ is equivariant with respect to the group action. If it is equivariant, $(M,\omega,\mu)$ is called a \emph{hamiltonian $G$-space} and a theorem of Kostant \cite{MR0294568} says that $\mu:M\to \mathcal{O}$ is an equivariant covering map. If $M$ is simply-connected, it is thus the universal cover of a coadjoint orbit.

More generally, we can define a map $\vartheta:M\times G\to\fg^*$ by $\textstyle \vartheta(p,g) = \Ad^*_g\mu(p) - \mu(g\cdot p)$ for $p\in M$ and $g\in G$, where $\Ad^*$ is the coadjoint representation. One can show that $\vartheta$ is in fact independent of $p$ and thus defines a map $\theta:G\to\fg^*$ by $\theta(g)=\vartheta(p,g)$ for any $p\in M$. This map satisfies the group cocycle condition $\textstyle \theta(g_1,g_2) = \Ad^*_{g_1}\theta(g_2) + \theta(g_1) $ and is thus called a \emph{symplectic cocycle}. The moment map can be chosen to be equivariant if and only if the cohomology class of $\theta$ is trivial.

Defining $c(X,Y)=\pair{d_e\theta(X)}{Y}$ for $X,Y\in\fg$ (where $e$ is the identity element of $G$), one finds that $c(X,Y)=\{\varphi_X,\varphi_Y\}-\varphi_{[X,Y]}$ where $\{\}$ is the Poisson bracket. In particular, $c$ is skew-symmetric and is in fact a Chevaley-Eilenberg 2-cocycle $c$ for $\fg$. Its cohomology class is trivial if and only if the cohomology class of $\theta$ is trivial. This cocycle $c$ defines a central extension of $\fg$ with base space $\hatfg=\fg\oplus\mathbb{R}$ and bracket $[(X,u),(Y,v)] = ([X,Y],c(X,Y))$. If $[c]=0$, this is isomorphic to the trivial central extension.

In the nontrivial case, it has long been believed that one can "integrate" this central extension of Lie algebras to a one-dimensional central extension $\hatG$ of the group $G$ and find that $M$ is a hamiltonian $\hatG$-space and thus a cover of a coadjoint orbit. Indeed, a commonly-cited result of Kirillov \cite{MR0412321,MR2069175} essentially states this, but Kirillov assumes earlier in both works that the group $G$ is simply-connected. The same assumption is made by Kostant \cite{MR0294568}, Chu \cite{MR342642} and Sternberg \cite{MR379759} when proving versions of this result. If one does not make this assumption, the integration of the central extension is not trivial. To our knowledge, there is no proof without this assumption in the literature, although there is some related work; de Saxcé and Vallée outline a procedure to produce a group extension using the symplectic cocycle $\theta$ and provide some examples but do not prove the result in generality \cite{MR2587386}, while Donato and Iglesias-Zemmour refer to the result and prove a diffeological version of it \cite{donato_iglesias-zemmour_2021}.

The question of necessary and sufficient conditions for the integrability of a one-dimensional Lie algebra central extension was first tackled by Tuynman and Wiegerinck \cite{MR948561} and then by Neeb \cite{MR1424633}, who improved on the earlier work by removing one of the two conditions provided by Tuynman and Wiegerinck. The remaining condition is interesting from the point of view of symplectic geometry because it essentially demands the existence of a moment map for the left-action of the group $G$ on itself with respect to the the unique left-invariant presymplectic form with value $c$ at the identity.

This conference paper is based on work with José Figueroa-O'Farrill \cite{Beckett:2022wvo} in which we show that the symplectic cocycle $\theta$ provides the moment map for the action of $G$ on itself required by Neeb, so that the required central extension $\hatG$ of $G$ does indeed exist and $M$ really is the cover of a coadjoint orbit of $\hatG$. In the process of removing the assumption that $G$ is simply-connected from Kirillov's theorem, we find that Neeb's integrability condition for one-dimensional central extensions can be rephrased in terms of the existence of a symplectic cocycle $\theta$ integrating the Chevally-Eilenberg cocycle $c$ whose cohomology class is the one corresponding to the algebra extension.

%---------------------------------------------------------------
\section{Hamiltonian $G$-spaces are covers of coadjoint orbits}

We begin by expanding upon the claim in that hamiltonian $G$-spaces are covers of coadjoint orbits. As discussed in the introduction, a hamiltonian $G$-space is a triple $(M,\omega,\mu)$ where $(M,\omega)$ is a symplectic manifold with a homogeneous, symplectic action of $G$ and $\mu:M\to\fg^*$ is a moment map for the action of $G$ which is equivariant with respect to the coadjoint action on $\fg^*$; that is, for all $g\in G$, $\Ad^*_g\mu(p)=\mu(g\cdot p)$. If $G$ is connected (and hence $M$ is also connected), equivariance of the moment map is equivalent to the dual comoment map $\varphi:\fg\to C^\infty(M)$ being a Lie algebra homomorphism. Below we record the aforementioned theorem due to Kostant, which concerns morphisms of hamilonian spaces $(M,\omega,\mu) \to (M',\omega',\mu')$, ie. smooth maps $\phi: M \to M'$ such that $\phi^*\omega'= \omega$ and $\phi^* \mu' = \mu$.

\begin{theorem}[Kostant \cite{MR0294568}]\label{thm:kostant-covering}
  Let $\phi: M \to M'$ be a morphism of hamiltonian $G$-spaces
  $(M,\omega,\mu)$ and $(M',\omega',\mu')$, with $G$ a connected Lie
  group.  Then $\phi$ is $G$-equivariant and a covering map.
\end{theorem}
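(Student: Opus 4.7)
The plan is to proceed in two stages: first, establish $G$-equivariance of $\phi$ by showing it intertwines the fundamental vector fields of the two actions; second, deduce the covering property from equivariance together with the symplectic pullback condition $\phi^*\omega' = \omega$.

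For the first stage, I would show $d\phi(\xi_X) = \xi'_X \circ \phi$ for every $X \in \fg$, where $\xi_X$ and $\xi'_X$ denote the fundamental vector fields on $M$ and $M'$ respectively. Since $G$ is connected, this infinitesimal intertwining integrates to the global equivariance relation $\phi(g\cdot p) = g\cdot\phi(p)$. To verify the intertwining, I would set $v_X(p) := d\phi(\xi_X|_p) - \xi'_X|_{\phi(p)} \in T_{\phi(p)}M'$ and show $v_X(p)=0$ using nondegeneracy of $\omega'$. Homogeneity of $M'$ gives $T_{\phi(p)}M' = \mathrm{span}\{\xi'_Y|_{\phi(p)} : Y \in \fg\}$, so it is enough to compute $\omega'(v_X, \xi'_Y)$. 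This uses $\phi^*\omega' = \omega$ and the pointwise identity $\varphi_X = \varphi'_X \circ \phi$ (which follows from $\mu = \mu'\circ\phi$), combined with the fact that equivariance of $\mu'$ forces $\varphi'$ to be a Lie algebra homomorphism, so that $\omega'(\xi'_X, \xi'_Y) = \varphi'_{[X,Y]}$; the two resulting terms cancel.

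For the second stage, since $\phi^*\omega' = \omega$ with $\omega$ nondegenerate, $d\phi$ is injective at each point, so $\phi$ is an immersion. The equivariance already established, combined with homogeneity of both $M$ and $M'$, shows that $d\phi$ maps the span of the $\xi_X|_p$ onto the span of the $\xi'_X|_{\phi(p)}$, which are the full tangent spaces; hence $d\phi$ is also surjective, making $\phi$ a local diffeomorphism. Since $\phi$ is surjective (by homogeneity of $M'$) and $G$-equivariant, it is identified with the natural projection $G/G_p \to G/G_{\phi(p)}$, where $G_p \subseteq G_{\phi(p)}$ are the respective stabilisers; as $d\phi$ is an isomorphism on tangent spaces, $G_p$ is an open subgroup of $G_{\phi(p)}$, so the fibre $G_{\phi(p)}/G_p$ is discrete and $\phi$ is a locally trivial fibration with discrete fibre, hence a covering map.

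The main obstacle is the equivariance step: the cancellation behind $\omega'(v_X, \xi'_Y) = 0$ is the heart of the argument and is where all three ingredients —- $\phi^*\omega' = \omega$, $\phi^*\mu' = \mu$, and the hamiltonian structure of $(M',\omega',\mu')$ —- must be combined. Once equivariance is in hand, the covering-map conclusion reduces to standard facts about $G$-equivariant local diffeomorphisms between homogeneous spaces.
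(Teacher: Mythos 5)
The paper does not actually prove this statement: it is recorded as Kostant's theorem and cited to \cite{MR0294568}, so there is no internal proof to compare against. Your argument is the standard one (essentially Kostant's) and it is sound: pair the defect vector $v_X(p)=d\phi(\xi_X|_p)-\xi'_X|_{\phi(p)}$ against the fundamental vector fields $\xi'_Y$, which span $T_{\phi(p)}M'$ by transitivity, conclude $v_X=0$ by nondegeneracy of $\omega'$, integrate the intertwining of fundamental vector fields to global equivariance using connectedness of $G$, and then obtain the covering property from the identification of $\phi$ with $G/G_p\to G/G_{\phi(p)}$, where the stabiliser inclusion $G_p\subseteq G_{\phi(p)}$ is open because $d\phi$ is an isomorphism (injective from $\phi^*\omega'=\omega$, surjective from equivariance plus homogeneity), so the fibre $G_{\phi(p)}/G_p$ is discrete.

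One small imprecision in the key cancellation: you attribute it to the hamiltonian structure of $(M',\omega',\mu')$ alone, but it needs the homomorphism property on \emph{both} spaces. Contracting with $\imath_{\xi'_Y}\omega'=d\varphi'_Y$ gives $\omega'(v_X,\xi'_Y)=\omega(\xi_X,\xi_Y)(p)-\omega'(\xi'_X,\xi'_Y)(\phi(p))$, and to make the two terms equal you must use $\omega(\xi_X,\xi_Y)=\varphi_{[X,Y]}$ on $M$ (equivariance of $\mu$) together with $\omega'(\xi'_X,\xi'_Y)=\varphi'_{[X,Y]}$ on $M'$, before $\varphi_{[X,Y]}=\varphi'_{[X,Y]}\circ\phi$ (from $\mu=\mu'\circ\phi$) closes the computation; note also that $\phi^*\omega'=\omega$ cannot be applied directly to $\omega'(d\phi\,\xi_X,\xi'_Y)$ since $\xi'_Y$ is not a priori in the image of $d\phi$, which is why the moment condition on $M'$ is the right tool there. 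Since both spaces are hamiltonian $G$-spaces by hypothesis, all of this is available and your proof goes through.
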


Recall that any coadjoint orbit $\bigO\subseteq\fg^*$ of $G$ has a natural $\Ad^*$-invariant symplectic structure $\omega_{\mathrm{KKS}}$ which is uniquely defined by the fact that at a point $\alpha\in\bigO$,

\begin{equation}
	(\omega_{\mathrm{KKS}})(\zeta_X,\zeta_Y)(\alpha) = \alpha(\comm{X}{Y})
\end{equation}
for all $X,Y\in\fg$, where $\zeta_X$ is the fundamental vector field for the coadjoint action on $\bigO$ associated to $X$. Then $(\bigO,\omega_{\mathrm{KKS}},i)$ is a hamiltonian $G$-space where $i:\bigO\to\fg^*$ is the inclusion map. If $(M,\omega,\mu)$ is a hamiltonian $G$-space with $G$ connected, the image of the moment map is a coadjoint orbit $\bigO \subset \fg^*$ and we may restrict the codomain so that $\mu : M \to \bigO$. One can then show the following.

\begin{proposition}\label{prop:moment-map-is-morphism}
  The moment map $\mu : M \to \bigO$ defines a morphism of hamiltonian
  $G$-spaces $(M,\omega,\mu)$ and $(\bigO,\omega_{\mathrm{KKS}},i)$.
\end{proposition}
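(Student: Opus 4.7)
The plan is to verify the two defining conditions of a morphism of hamiltonian spaces, namely $\mu^* i = \mu$ and $\mu^* \omega_{\mathrm{KKS}} = \omega$. The first is immediate: since $\mu : M \to \bigO$ is obtained from the original moment map by restricting the codomain from $\fg^*$ to $\bigO$, composing with the inclusion $i : \bigO \hookrightarrow \fg^*$ recovers the original map. So the real content is the equality $\mu^* \omega_{\mathrm{KKS}} = \omega$.

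The key observation is that by transitivity of the $G$-action, at any point $p \in M$ the tangent space $T_p M$ is spanned by the values of the fundamental vector fields $\xi_X(p)$ for $X \in \fg$. Hence it suffices to verify the equality after contracting with two such vector fields at an arbitrary point.

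First I would compute the pushforward of a fundamental vector field under $\mu$. Differentiating the equivariance condition $\mu(g\cdot p) = \Ad^*_g \mu(p)$ at $g = e$ in the direction $X \in \fg$ gives $d\mu_p(\xi_X(p)) = \zeta_X(\mu(p))$, where $\zeta_X$ is the fundamental vector field of the coadjoint action. The defining formula for $\omega_{\mathrm{KKS}}$ then yields
\begin{equation*}
(\mu^* \omega_{\mathrm{KKS}})_p(\xi_X, \xi_Y) = (\omega_{\mathrm{KKS}})_{\mu(p)}(\zeta_X, \zeta_Y) = \mu(p)([X,Y]).
\end{equation*}

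For the other side, I would use that equivariance of $\mu$ is equivalent (with $G$, and hence $M$, connected) to the comoment map $\varphi : \fg \to C^\infty(M)$ being a Lie algebra homomorphism, so that $\{\varphi_X, \varphi_Y\} = \varphi_{[X,Y]}$. Combined with the defining relation $\imath_{\xi_X}\omega = d\varphi_X$ and the standard identity $\{\varphi_X, \varphi_Y\} = \omega(\xi_X, \xi_Y)$, this gives
\begin{equation*}
\omega_p(\xi_X, \xi_Y) = \varphi_{[X,Y]}(p) = \mu(p)([X,Y]),
\end{equation*}
matching the previous computation. Since fundamental vector fields span every tangent space, this establishes $\mu^* \omega_{\mathrm{KKS}} = \omega$ throughout $M$. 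No serious obstacle is anticipated; the argument is a direct unpacking of definitions, and the only thing to watch is keeping sign conventions consistent between the Poisson bracket, the comoment map equation, and the KKS formula.
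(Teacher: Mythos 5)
Your proposal is correct and follows essentially the argument the paper intends (the conference paper states the proposition without proof, deferring to the full work): use equivariance to get $d\mu_p(\xi_X(p))=\zeta_X(\mu(p))$, then compare $(\omega_{\mathrm{KKS}})_{\mu(p)}(\zeta_X,\zeta_Y)=\mu(p)([X,Y])$ with $\omega_p(\xi_X,\xi_Y)=\{\varphi_X,\varphi_Y\}(p)=\varphi_{[X,Y]}(p)$, which is consistent with the paper's sign conventions (where $\{\varphi_X,\varphi_Y\}=\omega(\xi_X,\xi_Y)$ and equivariance is equivalent to $\varphi$ being a Lie algebra homomorphism). The compatibility $i\circ\mu=\mu$ is indeed immediate, so no gap remains.
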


There is an immediate corollary of the two results above.

\begin{corollary}\label{cor:covering}
  Let $G$ be a connected Lie group and $(M,\omega,\mu)$ a simply-connected hamiltonian $G$-space. Then $M$ is the universal cover of a coadjoint orbit $\bigO\in\fg^*$ with $G$-equivariant symplectic covering map $\mu:M\to \bigO$.
\end{corollary}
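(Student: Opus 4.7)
The plan is to stitch together the two preceding results directly. By Proposition \ref{prop:moment-map-is-morphism}, the (codomain-restricted) moment map $\mu : M \to \bigO$ is a morphism of hamiltonian $G$-spaces between $(M,\omega,\mu)$ and $(\bigO,\omega_{\mathrm{KKS}},i)$. Since $G$ is assumed connected, I can then apply Kostant's Theorem \ref{thm:kostant-covering} to this morphism; this delivers $G$-equivariance of $\mu$ and the fact that $\mu$ is a covering map in a single stroke, which is the main content of the statement.

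The remaining ingredients are essentially bookkeeping. From the very definition of a morphism of hamiltonian spaces, $\mu^*\omega_{\mathrm{KKS}} = \omega$, so $\mu$ is a symplectic map, and because a covering map is in particular a local diffeomorphism, $\mu$ is automatically a local symplectomorphism — this is what is meant by "symplectic covering map" in the statement. The hypothesis that $M$ is simply-connected then upgrades $\mu$ to the universal cover of $\bigO$ by the standard characterisation of universal covers as simply-connected covering spaces. The target orbit $\bigO$ is taken to be the image $\mu(M) \subseteq \fg^*$; homogeneity of the $G$-action on $M$ together with $\Ad^*$-equivariance of $\mu$ forces this image to be a single coadjoint orbit, so the codomain restriction used in the first step is legitimate.

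I do not anticipate any real obstacle, since both non-trivial inputs — Kostant's covering theorem and the identification of $\mu$ as a morphism of hamiltonian spaces — are already established. The only points worth a line of care in a written-out proof are verifying that $\mu(M)$ is indeed a single coadjoint orbit (so that Proposition \ref{prop:moment-map-is-morphism} genuinely applies with $\bigO = \mu(M)$), and noting that the covering conclusion of Theorem \ref{thm:kostant-covering} combined with $\mu^*\omega_{\mathrm{KKS}} = \omega$ is precisely what is meant by a \emph{symplectic} covering map.
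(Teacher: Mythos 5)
Your proposal is correct and matches the paper's reasoning exactly: the paper presents the corollary as immediate from Proposition~\ref{prop:moment-map-is-morphism} (identifying $\mu:M\to\bigO$ as a morphism of hamiltonian $G$-spaces, with $\bigO=\mu(M)$ a coadjoint orbit since $G$ is connected) followed by Theorem~\ref{thm:kostant-covering} (yielding equivariance and the covering property), with simple-connectedness of $M$ giving the universal cover. Your additional remarks about the symplectic condition $\mu^*\omega_{\mathrm{KKS}}=\omega$ and the legitimacy of the codomain restriction are just the bookkeeping the paper leaves implicit.
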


Our main result, Theorem~\ref{thm:homo-sym-ext}, can be viewed as a generalisation of this corollary to simply-connected homogeneous symplectic $G$-spaces $(M,\omega)$, for which moment maps always exist but cannot generally be chosen to be equivariant, the obstruction being the cohomology class of the symplectic cocycle $\theta$. The strategy in the proof of that theorem is to show that there exists a one-dimensional central extension $\hatG$ of $G$ and that $M$ has the structure of a hamiltonian $\hatG$-space, and then apply the corollary directly.

%---------------------------------------------------------------
\section{Symplectic cocycles and one-dimensional central extensions of Lie algebras}

Throughout, let $G$ be a connected Lie group. A 1-cocycle of $G$ with values in the coadjoint representation is a smooth map $\theta:G\to \fg^*$ satisfying the cocycle condition

\begin{equation}\label{eq:theta-cocycle}
	\theta(g_1,g_2) = \Ad^*_{g_1}\theta(g_2) + \theta(g_1).
\end{equation}
The derivative of $\theta$ at the identity is a linear map $d_e\theta:\fg\to\fg^*$, and we say that $\theta$ is a \emph{symplectic (group) cocycle} if $\pair{d_e\theta(X)}{Y}=-\pair{d_e(Y)}{X}$ for all $X,Y\in\fg$. The symplectic cocycle of Souriau described in the introduction is just such an object. To a symplectic cocycle $\theta$, we associate a Chevalley--Eilenberg 2-cocycle $c$ of $\fg$ defined by $c(X,Y)=\pair{d_e\theta(X)}{Y}$ (the cocycle condition for $c$ follows from that for $\theta$). We say that $\theta$ \emph{integrates} $c$, and one can show that, since $G$ is connected, if there exists a symplectic cocycle $\theta$ of $G$ integrating a given 2-cocycle $c$ of $\fg$, it is unique, and that $[c]=0$ in Lie algebra cohomology if and only if $[\theta]=0$ in Lie group cohomology.

There is a one-to-one correspondence between $H^2(\fg)$, the second cohomology of $\fg$, and (equivalence classes of) one-dimensional central extensions of $\fg$; that is, short exact sequences of Lie algebras

\begin{equation}\label{eq:alg-ce}
\begin{tikzcd}
	0 \arrow[r] & \RR \arrow[r] & \hatfg \arrow[r] &
	\fg \arrow[r] & 0
\end{tikzcd}
\end{equation}
where the image of the map $\RR$ in $\hatfg$ is central. Concretely, to a 2-cocycle $c$ of $\fg$, we associate a Lie algebra with base space $\hatfg=\fg\oplus\mathbb{R}$ and bracket $[(X,u),(Y,v)] = ([X,Y],c(X,Y))$; the maps in the short exact sequence are the obvious inclusion and projection, and the central extension associated to some other cocycle $c'$ is equivalent if and only if $[c']=[c]$.

%---------------------------------------------------------------
\section{Integrability of one-dimensional central extensions}

A one-dimensional central extension of a connected Lie group $G$ is a short exact sequence of Lie groups

\begin{equation}\label{eq:grp-ce}
\begin{tikzcd}
	1 \arrow[r] & K \arrow[r] & \hatG \arrow[r] &
	G \arrow[r] & 1
\end{tikzcd}
\end{equation}
where $K$ is a connected one-dimensional Lie group whose image in $\hatG$ is central. We will occasionally abuse language and simply say that $\hatG$  is a central extension of $G$ if such a short exact sequence exists. We say that such an extension of $G$ \emph{integrates} a central extension of Lie algebras \eqref{eq:alg-ce} if $\hatG$ is a Lie group with Lie algebra $\hatfg$, $K$ is a one-dimensional Lie group (whose Lie algebra we identify with $\RR$) and the maps appearing in \eqref{eq:alg-ce} are the derivatives of those in \eqref{eq:grp-ce}.

The potential obstruction to Kirillov's theorem (our Theorem~\ref{thm:homo-sym-ext}) holding in the case of a group $G$ which is not simply-connected is the fact that a one-dimensional central extension of $G$ integrating the central extension of Lie algebras \eqref{eq:alg-ce} defined by the cocycle $c$ arising from the moment map is not guaranteed to exist a priori. 

This problem might be evaded by replacing $G$ by its universal cover $\widetilde G$, for which such a central extension always exists, but this is not satisfactory if one is interested, for example, in classifying the homogenous symplectic spaces for a fixed (connected) Lie group $G$. We now state a theorem by Neeb (which is an improvement on an earlier result by Tuynman and Wiegerinck \cite{MR948561}) which we will use to show the existence of the appropriate central extension.

To a 2-cocycle $c$ of $\fg$ considered as an element $c\in\bigwedge^2\fg^*=\bigwedge^2 T^*_eG$, we associate a closed, left-invariant 2-form $\Omega$ on $G$ defined by

\begin{equation}\label{eq:invt-2-form}
	\Omega_g := L_{g^{-1}}^*c
\end{equation}
for all $g\in G$, where $L_{g^{-1}}$ is left-translation by $g^{-1}$. Equivalently, $\Omega$ is the unique such 2-form with value $\Omega_e=c$ at the identity. We will denote by $\lambda_X$ and $\rho_X$ respectively the left- and right-invariant vector fields on $G$ associated to $X\in\fg$, and we note that $\Omega$ can also be defined as the unique left-invariant 2-form on $G$ such that

\begin{equation}\label{eq:Omega-lambda}
	\omega(\lambda_X,\lambda_Y) = c(X,Y)
\end{equation}
for all $X,Y\in\fg$.

\begin{theorem}[Neeb \cite{MR1424633}]\label{thm:neeb}
  Let $G$ be a connected Lie group, $c\in\bigwedge^2\fg^*$ a Chevalley--Eilenberg 2-cocycle of the Lie algebra $\fg$ and $\Omega\in\Omega^2(G)$ the corresponding left-invariant closed 2-form. Then there exists a one-dimensional central extension of Lie groups \eqref{eq:grp-ce} integrating the one-dimensional central extension of Lie algebras \eqref{eq:alg-ce} defined by $c$ if and only if for any $X\in\fg$ there exists a function $\Phi_X\in C^\infty(G)$ satisfying
  
  \begin{equation}\label{eq:neeb}
    \imath_{\rho_X}\Omega = d\Phi_X.
  \end{equation}
\end{theorem}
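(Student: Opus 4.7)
The plan is to prove the two directions separately.

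For the ``only if'' direction, suppose $\pi: \hatG \to G$ is a one-dimensional central extension integrating the Lie algebra extension. Let $\eta \in \Omega^1(\hatG)$ be the left-invariant 1-form whose value at the identity is the projection $\hatfg = \fg \oplus \RR \to \RR$. Using the Maurer--Cartan equation and the bracket $[(X,u),(Y,v)] = ([X,Y], c(X,Y))$, one finds $d\eta = -\pi^*\Omega$. For $X \in \fg$, pick any lift $\hat X \in \hatfg$ and consider $\tilde\Phi_X := \eta(\rho_{\hat X}) \in C^\infty(\hatG)$. Because $\eta$ is left-invariant, $\tilde\Phi_X(\hat g) = \eta_e(\Ad(\hat g^{-1})\hat X)$, and since the central subgroup $K$ acts trivially via $\Ad$, this descends to a function $\Phi_X \in C^\infty(G)$. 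As $\rho_{\hat X}$ generates the flow of left-translations by $\exp(t\hat X)$, left-invariance of $\eta$ yields $\mathcal{L}_{\rho_{\hat X}}\eta = 0$, whence Cartan's formula gives $\imath_{\rho_{\hat X}} d\eta = -d\tilde\Phi_X$. Using that $\rho_{\hat X}$ is $\pi$-related to $\rho_X$ (a consequence of $\pi$ being a homomorphism), the naturality identity $\imath_{\rho_{\hat X}} \pi^*\Omega = \pi^*(\imath_{\rho_X}\Omega)$ combined with $d\eta = -\pi^*\Omega$ yields $\pi^*(\imath_{\rho_X}\Omega) = \pi^*(d\Phi_X)$, and since $\pi$ is a submersion we conclude $\imath_{\rho_X}\Omega = d\Phi_X$.

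For the ``if'' direction, given functions $\Phi_X$ solving the equations, the task is to produce $\hatG$. The strategy is to realise $\hatG$ on $G \times K$, where $K$ is either $\RR$ or $\RR / \Gamma$ for some discrete subgroup $\Gamma$, equipped with a group law twisted by a smooth 2-cocycle $f: G \times G \to K$. To construct $f$, one pulls back to the universal cover $\widetilde{G}$, where the pullback of $\Omega$ is exact and may be written as $d\alpha$; the equations $\imath_{\rho_X}\Omega = d\Phi_X$ then control how $\alpha$ transforms under left-translations and allow one to assemble a local group cocycle which exponentiates to the desired global $f$. The derivative of $f$ at the identity recovers $c$ by construction, so $\hatG$ integrates the prescribed Lie algebra extension.

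The main obstacle is the ``if'' direction, specifically showing that the period subgroup $\Gamma \subset \RR$ consisting of the periods of $\Omega$ over classes in $\pi_2(G)$ is discrete, so that $K = \RR / \Gamma$ is a genuine Lie group. Tuynman and Wiegerinck \cite{MR948561} required this discreteness as a separate hypothesis; Neeb's key insight is that it follows automatically from the existence of the $\Phi_X$, which simultaneously controls the ``flux'' obstruction along $\pi_1(G)$. For this delicate analytic step I would cite Neeb's argument \cite{MR1424633} directly rather than reproduce it, since the novelty of the present paper lies elsewhere (namely, in supplying these moment functions from the symplectic cocycle $\theta$).
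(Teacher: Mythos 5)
The paper does not prove Theorem~\ref{thm:neeb} at all: it is stated as Neeb's result and used as a black box, so there is no internal proof to compare against, and your proposal therefore supplies strictly more detail than the paper does. Your ``only if'' direction is correct and complete: the identity $d\eta=-\pi^*\Omega$ for the left-invariant one-form $\eta$ with $\eta_e=\mathrm{pr}_{\RR}$, the formula $\eta(\rho_{\hat X})(\hat g)=\eta_e(\Ad_{\hat g^{-1}}\hat X)$ together with its $K$-invariance (centrality of $K$), Cartan's formula with $\mathcal{L}_{\rho_{\hat X}}\eta=0$, and the $\pi$-relatedness of $\rho_{\hat X}$ and $\rho_X$ give $\pi^*(\imath_{\rho_X}\Omega)=\pi^*(d\Phi_X)$, and injectivity of $\pi^*$ on forms (since $\pi$ is a surjective submersion) finishes the argument. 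For the ``if'' direction you only sketch the construction and defer the substance to Neeb \cite{MR1424633}; given that the theorem is attributed to Neeb and the paper itself cites it without proof, that is a legitimate way to treat it, though it is of course not a self-contained proof. One factual quibble: your description of the ``delicate step'' as discreteness of the period group of $\Omega$ over classes in $\pi_2(G)$ is off target in this finite-dimensional setting, because $\pi_2(G)=0$ for every finite-dimensional Lie group, so those periods vanish identically irrespective of the $\Phi_X$; the genuinely nontrivial content of the ``if'' direction is the descent from the universal cover, i.e.\ lifting $\pi_1(G)$ into the extension of $\widetilde G$, and it is precisely there that exactness (rather than mere closedness) of the closed one-forms $\imath_{\rho_X}\Omega$ is used. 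Since you delegate that direction to the citation anyway, this mischaracterisation is not a logical gap in what you prove, but the remark as phrased should be corrected or dropped.
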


Note that this theorem does not depend on the global topology of the one-dimensional Lie group $K$; it can be freely chosen. As such, we may omit from the discussion in what follows.

\section{Homogeneous symplectic spaces are covers of coadjoint orbits}

We now state and prove our main theorem.

\begin{theorem}[AB, Figueroa-O'Farrill \cite{Beckett:2022wvo}]\label{thm:homo-sym-ext}
  Let $G$ be a connected Lie group and $(M,\omega)$ be a
  simply-connected symplectic homogeneous manifold of $G$.  Then
  there exists a one-dimensional central extension $\widehat{G}$ of
  $G$ such that $M$ is the universal cover of a coadjoint orbit of
  $\widehat{G}$.
\end{theorem}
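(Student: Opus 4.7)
The overall strategy is to construct a one-dimensional central extension $\hatG$ of $G$ together with a hamiltonian $\hatG$-structure on $M$, and then invoke Corollary~\ref{cor:covering}. Since $M$ is simply-connected, a comoment map $\varphi:\fg\to C^\infty(M)$ and hence a moment map $\mu:M\to\fg^*$ exist, and, as recalled in the introduction, these produce a symplectic group cocycle $\theta:G\to\fg^*$ integrating a Chevalley--Eilenberg 2-cocycle $c$ of $\fg$. This $c$ defines the candidate Lie algebra extension $\hatfg=\fg\oplus\RR$ with bracket $[(X,u),(Y,v)]=([X,Y],c(X,Y))$, and the theorem reduces to integrating $\hatfg$ to a group extension $\hatG$ and promoting $M$ to a hamiltonian $\hatG$-space.

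The central step is to verify the hypothesis of Theorem~\ref{thm:neeb} for this $c$, which I propose to do by exhibiting $\theta$ itself as the source of the required potentials. Define $\Phi_X\in C^\infty(G)$ by $\Phi_X(g):=-\pair{\theta(g)}{X}$ and check $\imath_{\rho_X}\Omega=d\Phi_X$ by pairing both sides with an arbitrary left-invariant field $\lambda_Y$. Using $(\rho_X)_g=(\lambda_{\Ad_{g^{-1}}X})_g$ together with~\eqref{eq:Omega-lambda}, the left-hand side evaluates at $g$ to $c(\Ad_{g^{-1}}X,Y)$. For the right-hand side, differentiating $\Phi_X$ along $\lambda_Y$ and using the cocycle identity~\eqref{eq:theta-cocycle} to expand $\theta(g\exp(tY))$ reduces the computation to $-\pair{d_e\theta(Y)}{\Ad_{g^{-1}}X}$, which equals $c(\Ad_{g^{-1}}X,Y)$ by the definition of $c$ and its skew-symmetry. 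Neeb's theorem then delivers a central extension $\hatG$ of $G$ integrating $\hatfg$.

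It remains to realise $M$ as a hamiltonian $\hatG$-space. Pulling the $G$-action back along the projection $\pi:\hatG\to G$ yields a transitive symplectic $\hatG$-action whose fundamental vector fields coincide with those of $G$ (the central $K$ acts trivially). A natural candidate comoment map is $\hat\varphi:\hatfg\to C^\infty(M)$ given by $\hat\varphi(X,u)=\varphi_X+u$, with the constant $u$ identified with a constant function on $M$; that this is indeed a comoment map for the $\hatG$-action is immediate since $u$ has vanishing differential. The defining relation $\{\varphi_X,\varphi_Y\}-\varphi_{[X,Y]}=c(X,Y)$ then shows directly that $\hat\varphi$ is a Lie algebra homomorphism, so, since $\hatG$ is connected, the dual moment map $\hat\mu:M\to\hatfg^*$ is equivariant and $(M,\omega,\hat\mu)$ is a hamiltonian $\hatG$-space. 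Since $M$ is simply-connected, Corollary~\ref{cor:covering} identifies it with the universal cover of a coadjoint orbit of $\hatG$.

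I expect the Neeb verification to be the real content of the proof: once one sees that the asymmetry between the left-invariant $\Omega$ and the right-invariant fields $\rho_X$ is absorbed exactly by the coadjoint piece of the cocycle identity for $\theta$, everything else is essentially bookkeeping. This computation is also what makes the result conceptually transparent, since it exhibits $\theta$ as providing, in Neeb's language, the moment map for the left-translation action of $G$ on itself with respect to $\Omega$, which is precisely the obstruction data that Neeb's theorem requires.
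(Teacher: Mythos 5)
Your proposal is correct and follows essentially the same route as the paper: define $\Phi_X(g)=-\pair{\theta(g)}{X}$, verify Neeb's criterion via the cocycle identity for $\theta$, promote $\varphi$ to the comoment map $\widehat\varphi_{(X,u)}=\varphi_X+u$ for the lifted $\widehat{G}$-action, and conclude with Corollary~\ref{cor:covering}. The only (harmless) variation is that you test $\imath_{\rho_X}\Omega=d\Phi_X$ against left-invariant fields $\lambda_Y$, while the paper contracts with right-invariant fields $\rho_Y$; both computations close in the same way.
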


\begin{proof}
	The assumption that $M$ is simply-connected guarantees the existence of a (not necessarily equivariant) moment map; indeed, since $d\omega=0$, by Cartan's magic formula and $G$-invariance of $\omega$ we have $d\imath_{\xi_X}\omega=\eL_{\xi_X}\omega=0$, so $\imath_{\xi_X}\omega$ is a closed one-form and thus exact since $\pi_1(M)=1$. Thus there exists a comoment map $\varphi:\fg\to C^\infty(M)$, $X\mapsto\phi_X$ satisfying the equation $\imath_{\xi_X} \omega = d\varphi_X$ and a moment map $\mu:M\to\fg^*$ dual to $\phi$. Let $\theta$, $c$ be the corresponding cocycles as described in the introduction and $\Omega$ the closed, left-invariant 2-form given by equation~\eqref{eq:invt-2-form}. We define a map $\Phi:\fg\to C^\infty(G)$ sending $X\mapsto \Phi_X$, where $\Phi_X(g)=-\pair{\theta(g)}{X}$ for $X\in\fg$ and $g\in G$, and we will show that equation~\eqref{eq:neeb} holds. Since right-invariant vector fields span the tangent space at every point of $G$, it is sufficient to show that
	
	\begin{equation}
		\Omega(\rho_X,\rho_Y) = \eL_{\rho_Y}\Phi_X
	\end{equation}
	for all $X,Y\in\fg$. Indeed, noting that
	$\left( \rho_X \right)_g=\left(\lambda_{\Ad_{g^{-1}}X}\right)_g$
	at any $g\in G$, and using \eqref{eq:Omega-lambda}, we have
	
	\begin{equation}
		\Omega(\rho_X,\rho_Y)(g)
			= \Omega(\lambda_{\Ad_{g^{-1}}X},\lambda_{\Ad_{g^{-1}}Y})(g)
			= c(\Ad_{g^{-1}}X,\Ad_{g^{-1}}Y)
			= (\Ad^*_g c)(X,Y),
	\end{equation}
	where $(\Ad_g^*c)(X,Y):=c(\Ad^*_{g^{-1}}X,\Ad^*_{g^{-1}}Y)$. On the other hand, using a property of the exponential map and the cocycle condition~\eqref{eq:theta-cocycle} for $\theta$,
	
	\begin{equation}
	\begin{split}
		(\eL_{\rho_Y}\Phi_X)(g)
		&= -\dv{t} \pair{\theta(\exp(tY)g)}{X}\eval_{t=0}\\
		&= -\dv{t} \pair{\theta\qty(g\exp(t\Ad_{g^{-1}}Y))}{X}\eval_{t=0}\\
		&= -\dv{t} \qty(\pair{\Ad^*_g\theta\qty(\exp(t\Ad_{g^{-1}}Y))}{X} + \pair{\theta(g)}{X})\eval_{t=0}\\
		&= -\dv{t} \pair{\theta\qty(\exp(t\Ad_{g^{-1}}Y))}{\Ad_{g^{-1}}X}\eval_{t=0}\\
		&= (\Ad^*_gc)(X,Y).
	\end{split}
	\end{equation}
	Thus equation~\eqref{eq:neeb} holds, and so by Theorem~\ref{thm:neeb} there exists a central extension \eqref{eq:grp-ce} of $G$ integrating the central extension \eqref{eq:alg-ce} of $\fg$ determined by $c$.
	\\
	The action of $G$ on $M$ induces an action of $\hatG$ on $M$ by $\widehat{g}\cdot m = \pi(\widehat{g})\cdot m$, where $\widehat{g}\in\hatG$ and $\pi:\hatG\to G$ is the quotient map; clearly this action is symplectic and transitive since the action of $G$ is. We denote by $\xi_{(X,u)}$ the fundamental vector field on $M$ generated by $(X,u)\in\hatfg=\fg\oplus\RR$, but $(0,u)\in\ker\pi_*$ for all $u\in\RR$, so $\xi_{(X,u)}=\xi_X$. Now let $\widehat\varphi:\hatfg\to C^\infty(M)$, $(X,u)\mapsto\widehat\varphi_{(X,u)}$ be the map given by $\widehat\varphi_{(X,u)}(p)=\varphi_X(p)+u$ for all $p\in M$. We have
	
	\begin{equation}
		d\widehat\varphi_{(X,u)} = d\varphi_X = \imath_{\xi_X}\omega = \imath_{\xi_{(X,u)}}\omega,
	\end{equation}
	so $\widehat\varphi$ is a comoment map for the action of $\hatG$ on $M$, and
	
 	\begin{equation}
 		\widehat\varphi_{\comm{(X,u)}{(Y,v)}} = \widehat\varphi_{(\comm{X}{Y},c(X,Y))} = \varphi_{\comm{X}{Y}} + c(X,Y) = \acomm{\varphi_X}{\varphi_Y} = \acomm{\widehat\varphi_{(X,u)}}{\widehat\varphi_{(Y,v)}}
 	\end{equation}
 	so $\widehat\varphi$ is a homomorphism of Lie algebras, and so since $G$ is connected, the dual moment map $\widehat\mu:M\to\hatfg^*$ is $\hatG$-equivariant (with  respect to the adjoint action on $\hatfg$) and so $(M,\omega,\widehat\mu)$ is a hamiltonian $\hatG$-space. It then follows from Corollary \ref{cor:covering} that $M$ is the universal cover of a coadjoint orbit $\widehat\bigO$ of $\hatG$ with equivariant symplectic covering map $\widehat\mu$.
\end{proof}

%---------------------------------------------------------------
\section{Affinisation of the coadjoint action}
\label{sec:affine}

As first noted by Souriau, although a moment map $\mu:M\to\fg^*$ for a symplectic $G$-space $(M,\omega)$ need not be equivariant with respect to the coadjoint action, there always exists an \emph{affine} action of $G$ on $\fg^*$, $\rho:G\to \Aff(\fg^*)$, for which $\mu$ is automatically equivariant; the action of $g\in G$ on $\alpha\in\fg^*$ is given in our conventions by

\begin{equation}
	\rho(g)\alpha = \Ad^*_g\alpha - \theta(g)
\end{equation}
where $\theta$ is the Souriau symplectic cocycle. That this formula defines an action is equivalent to $\theta$ being a cocycle of $G$ with values in the coadjoint representation. We now interpret Theorem~\ref{thm:homo-sym-ext} in terms of this action.

Generalising the KKS structure on coadjoint orbits, there is a natural $G$-invariant symplectic structure on orbits of this affine action. On the orbit $\bigO^{\mathrm{aff}}_\alpha=\{\rho(g)\alpha\,|\,g\in G\}$ of $\alpha\in\fg^*$, this is given by

\begin{equation}
	(\omega_{\mathrm{aff}})_\alpha(\zeta_X,\zeta_Y) = \pair{\alpha}{\comm{X}{Y}} + c(X,Y)
\end{equation}
for all $X,Y\in\fg$, where $\zeta_X$ is the fundamental vector field on $\bigO^{\mathrm{aff}}_\alpha$ with respect to the affine action associated to $X\in\fg$, and we note that it is sufficient to give the value at $\alpha$ by $G$-equivariance. The inclusion $i:\bigO^{\mathrm{aff}}_\alpha\to\fg^*$ is a moment map as in the case of coadjoint orbits, but $(\bigO^{\mathrm{aff}}_\alpha,\omega_{\mathrm{aff}},i)$ is not hamiltonian for $\theta\neq 0$.

Such affine orbits are closely related to coadjoint orbits of $\hatG$. Indeed, note that the vector space splitting $\hatfg=\fg\oplus\RR$ induces a dual splitting $\hatfg^*=\fg^*\oplus\RR$; letting $(\alpha,\zeta)\in\hatfg^*$ and $\widehat{g}\in \hatG$ with image $g\in G$ under the quotient map and denoting the coadjoint representation of $\hatG$ by $\widehat\Ad^*$, one can show that

\begin{equation}
	\widehat\Ad^*_{\widehat{g}}(\alpha,\eta)=(\Ad_g^*\alpha-\eta\theta(g),\eta).
\end{equation}
Since the right-hand side of this equation only depends on $\widehat{g}$ through $g$, we see that this coadjoint action factors to an action of $G$ on $\hatfg^*$ which we denote by $\widehat\Ad^*_g(\alpha,\eta)=(\Ad_g^*\alpha-\eta\theta(g),\eta)$ and that both actions preserve hyperplanes $\eta=\text{const.}$. In particular, the action on the $\eta=1$ hyperplane is $\widehat\Ad^*_g(\alpha,1)=(\rho(g)\alpha,1)$, so there is an isomorphism of $G$-spaces $\bigO^{\mathrm{aff}}_\alpha\to\widehat\bigO_{(\alpha,1)}$ where the latter space is the coadjoint orbit of $(\alpha,1)$. One can show that this map also preserves the symplectic structures, so the two orbits are isomorphic as homogeneous $G$-spaces.

We can thus interpret Theorem~\ref{thm:homo-sym-ext} as saying that any simply-connected homogeneous symplectic $G$-space $(M,\omega)$ is the universal cover of an orbit $\bigO^{\mathrm{aff}}$ of the affine action with the (symplectic, $G$-equivariant) covering map being the moment map $\mu:(M,\omega)\to \bigO^{\mathrm{aff}}$.
 
%---------------------------------------------------------------
\section{Invariant presymplectic structures on $G$ and integrability}

We now note that we can interpret the closed left-invariant 2-form $\Omega$ on $G$ associated to a 2-cocycle $c$ of $\fg$ as a \emph{presymplectic} structure\footnote{That is, $\omega$ is a closed 2-form of constant rank which may be degenerate.} on $G$ which is invariant under the action of $G$ on itself by left translation. Under this interpretation, the criterion \eqref{eq:neeb} says that $\Phi:\fg\to C^\infty(G)$ is a comoment map -- this perspective is not emphasised by Neeb but is given in the earlier work \cite{MR948561} on which Neeb builds. We showed in the proof of Theorem~\ref{thm:homo-sym-ext} that such a map $\Phi$ can be constructed from a symplectic cocycle $\theta$ integrating $c$ by setting $\Phi_X(g)=-\pair{\theta(g)}{X}$ for $X\in\fg$ and $g\in G$. The converse is also true; if the comoment map exists, we can assume without loss of generality that $\Phi_X(e)=0$ for all $X\in\fg$,\footnote{Indeed, if we have $\Phi_X(e)\neq 0$ then the map $\Phi':\fg\to C^\infty(G)$ given by $\Phi'_X(g)=\Phi_X(g)-\Phi_X(e)$ satisfies $\Phi'_X(e)=0$ and $d\Phi_X'=d\Phi_X=\imath_{\rho_X}\Omega$.} and then the same formula defines a map $\theta:G\to\fg^*$ which one can show is a symplectic cocycle for $G$ which integrates $c$. We have thus shown that the existence of a symplectic cocycle integrating $c$ is equivalent to the existence of the comoment map $\Phi$, and so we may restate Theorem~\ref{thm:neeb} as follows.
 
\begin{theorem}\label{thm:group-ext-symp-cocycle}
  Let $G$ be a connected Lie group and \eqref{eq:alg-ce} a one-dimensional central extension of the Lie algebra $\fg$. This integrates to a one-dimensional central extension \eqref{eq:grp-ce} of the group $G$ if and only if there exists a symplectic cocycle   $\theta:G\to\fg^*$ integrating a representative 2-cocycle $c$ of $\fg$ of the cohomology class corresponding to the extension  \eqref{eq:alg-ce}.
\end{theorem}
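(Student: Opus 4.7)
The plan is to invoke Neeb's theorem (Theorem~\ref{thm:neeb}) and establish a bijective correspondence between symplectic cocycles $\theta : G \to \fg^*$ integrating $c$ and families of functions $\Phi : \fg \to C^\infty(G)$, $X \mapsto \Phi_X$, satisfying $\imath_{\rho_X}\Omega = d\Phi_X$ — that is, comoment maps for the action of $G$ on itself by left translation with respect to the left-invariant presymplectic form $\Omega$. Given such a correspondence, each direction of Theorem~\ref{thm:group-ext-symp-cocycle} follows from the corresponding direction of Theorem~\ref{thm:neeb}.

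For the forward direction, given $\theta$ I would set $\Phi_X(g) := -\pair{\theta(g)}{X}$; the verification that $\imath_{\rho_X}\Omega = d\Phi_X$ is exactly the calculation already carried out inside the proof of Theorem~\ref{thm:homo-sym-ext}, where it was shown that both sides agree when contracted with $\rho_Y$ at every $g \in G$.

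For the converse, starting from functions $\Phi_X$ produced by Neeb's criterion, I would first replace each $\Phi_X$ with $\Phi_X - \Phi_X(e)$, which does not affect $d\Phi_X$, so that $\Phi_X(e) = 0$ for every $X \in \fg$. I then set $\pair{\theta(g)}{X} := -\Phi_X(g)$; smoothness and linearity of $\pair{\theta(g)}{\cdot}$ are immediate. Differentiating along a curve $g = \exp(tY)$ at $t=0$ and using $(\rho_Y)_e = Y$ together with $\imath_{\rho_X}\Omega = d\Phi_X$ yields $\pair{d_e\theta(Y)}{X} = -c(X,Y)$, which simultaneously shows that $\theta$ integrates $c$ and, via skew-symmetry of $c$, that $d_e\theta$ is skew-symmetric, so $\theta$ is symplectic once the cocycle condition is verified.

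The main obstacle is precisely this group cocycle condition~\eqref{eq:theta-cocycle}. The strategy is to fix $g_1 \in G$ and show that the function $g \mapsto \Phi_X(g_1 g) - \Phi_{\Ad_{g_1^{-1}}X}(g)$ is constant on $G$ by checking that its exterior derivative vanishes. The two ingredients required are the left-invariance $L_{g_1}^*\Omega = \Omega$ and the transformation rule $L_{g_1}^*\rho_X = \rho_{\Ad_{g_1^{-1}}X}$ for right-invariant vector fields under left translation; together these turn $d(\Phi_X \circ L_{g_1})$ into $d\Phi_{\Ad_{g_1^{-1}}X}$. Evaluating the resulting constant at $g = e$ using $\Phi_X(e) = 0$ produces the identity $\Phi_X(g_1 g_2) = \Phi_X(g_1) + \Phi_{\Ad_{g_1^{-1}}X}(g_2)$, which dualises to~\eqref{eq:theta-cocycle}.
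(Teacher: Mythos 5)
Your proof is correct and takes essentially the same route as the paper: both directions are reduced to Neeb's Theorem~\ref{thm:neeb} through the correspondence $\Phi_X(g)=-\pair{\theta(g)}{X}$, with the forward verification cited from the computation in the proof of Theorem~\ref{thm:homo-sym-ext}. Where the paper dismisses the converse with ``one can show'', your constancy argument for $g\mapsto\Phi_X(g_1g)-\Phi_{\Ad_{g_1^{-1}}X}(g)$ (after normalising $\Phi_X(e)=0$) correctly supplies the omitted verification of the cocycle condition~\eqref{eq:theta-cocycle}.
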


%---------------------------------------------------------------
\section{Conclusion}

The classical theorem of Kirillov, our  Theorem~\ref{thm:homo-sym-ext}, that a simply-connected homogeneous symplectic space of a connected Lie group $G$ is the universal cover of a coadjoint orbit either of $G$ or of a one-dimensional central extension of $G$ -- sometimes paraphrased as "every homogeneous symplectic space is locally a coadjoint orbit" -- was previously only proved in the literature under the assumption that $G$ is simply-connected. In the proof of Theorem~\ref{thm:homo-sym-ext}, we showed that this assumption can be removed. In Section~\ref{sec:affine}, we also interpreted the theorem as saying that simply-connected symplectic homogeneous $G$-spaces are universal covers of \emph{affine} $G$-orbits in $\fg^*$.

The key to removing the assumption that $G$ is simply-connected was Neeb's Theorem~\ref{thm:neeb} on the integrability of one-dimensional central extensions of Lie algebras, which we reinterpreted as a statement about the existence of a comoment map for a left-invariant presymplectic form on $G$ and then as a statement about integrability of cocycles (Theorem~\ref{thm:group-ext-symp-cocycle}). We thus emphasise the intimate links between one-dimensional central extensions of Lie groups and algebras, the existence of moment maps for homogeneous symplectic spaces, and symplectic cohomology.

%%%%%%%%%%%%%%%%%%%%%%%%%%%%%%%%%%%%%%%%%%
\reftitle{References}

\bibliography{main}

\end{document}